  \definecolor{cupgreen}{rgb}{0,0.498,0.208}
  \definecolor{cupblue}{rgb}{0,0,.5}
  \definecolor{cupred}{rgb}{1,0.04,0}
  \definecolor{cuppink}{rgb}{0.925,0,0.545}
  \definecolor{cupmagenta}{rgb}{0.624,0.161,0.424}
  \definecolor{cupbrown}{rgb}{0.71,0.212,0.133}
  \definecolor{cupgreen}{rgb}{0,0,0}
  \definecolor{cupblue}{rgb}{0,0,0}
  \definecolor{cupred}{rgb}{0,0,0}
  \definecolor{cuppink}{rgb}{0,0,0}
  \definecolor{cupmagenta}{rgb}{0,0,0}
  \definecolor{cupbrown}{rgb}{0,0,0}
\definecolor{TITLE}{rgb}{0,0,0}
\definecolor{AUTHOR1}{rgb}{0.00,0.59,0.00}
\definecolor{AUTHOR2}{rgb}{0.50,0.00,1.00}
\definecolor{SECTION}{rgb}{0.50,0.00,1.00}
\definecolor{FOOTTITLE}{rgb}{0.00,0.50,0.75}
\definecolor{THM}{rgb}{0.8,0,0.1}
\definecolor{SEC}{rgb}{0,0,1}
\newtheorem{theorem}{{\color{THM} Theorem}}[section]
\newtheorem{lemma}[theorem]{{\color{THM}Lemma}}
\newtheorem{conjecture}[theorem]{{\color{THM}Conjecture}}
\newtheorem{proposition}[theorem]{{\color{THM}Proposition}}
\newtheorem{corollary}[theorem]{{\color{THM}Corollary}}
\theoremstyle{definition}
\newtheorem{example}[theorem]{{\color{THM}Example}}
\newtheorem{remark}[theorem]{{\color{THM}Remark}}
\newcommand{\A}{\mathcal A}
\newcommand{\B}{\mathcal B}
\newcommand{\D}{\mathcal D}
\newcommand{\G}{\mathcal G}
\newcommand{\M}{\mathcal M}
\newcommand{\V}{\mathcal V}
\newcommand{\K}{\mathcal L}
\newcommand{\W}{\mathcal W}
\newcommand{\N}{\mathcal N}
\newcommand{\bea}{\begin{eqnarray*}}
\newcommand{\eea}{\end{eqnarray*}}
\begin{document}
\noindent \textcolor[rgb]{0.99,0.00,0.00}{}\\[.5in]
\title[More on Lie derivations  of   generalized matrix algebras] {\color{TITLE} More on Lie derivations  of   generalized matrix algebras}
\author[Mokhtari, Ebrahimi Vishki]{A.H. Mokhtari  and H.R. Ebrahimi Vishki}
\address{Department of Pure Mathematics, Ferdowsi University of Mashhad, P.O. Box 1159, Mashhad 91775, Iran}
\email{{amirmkh2002@yahoo.com}}
\address{ Department of Pure Mathematics and  Center of Excellence in Analysis on Algebraic Structures (CEAAS), Ferdowsi University of Mashhad, P.O. Box 1159, Mashhad 91775, IRAN.}
\email{{vishki@um.ac.ir}}
\begin{abstract} Motivated by  the Cheung's elaborate work [Linear Multilinear Algebra,
\textbf{51}  (2003), 299-310], we investigate the construction of a Lie derivation on a generalized matrix algebra and apply it to give a characterization for such a Lie derivation to be proper. Our approach not only provides a direct proof for some known  results in the theory, but also it presents several sufficient conditions assuring the properness of Lie derivations on certain generalized matrix algebras.
  \end{abstract}
\subjclass[2010]{Primary: 16W25; Secondary: 15A78,  47B47}
\keywords {Derivation, Lie derivation, generalized matrix algebra, triangular algebra}
\maketitle
\section{\color{SEC}Introduction and Preliminaries}
Let $\A$ be a unital algebra (over a commutative unital ring ${\bf R}$) and $\M$ be an $\A-$module. A linear  mapping $\D:\A\rightarrow\M$  is said to be a derivation if
\[\D(ab)=\D(a)b+a\D(b)\qquad (a, b\in \A).\]
\noindent A linear  mapping $\K:\A\rightarrow\M$ is called a Lie derivation if
\[\K([a,b])=[\K(a),b]+[a,\K(b)]\qquad (a, b\in \A),\]
where $[a, b]=ab-ba$.  Every  derivation is trivially a Lie derivation. If $\D:\A\rightarrow\A$ is a derivation and
$\tau:\A\rightarrow Z(\A)$ is a linear map, where $Z(\A)$ denotes the center of $\A$, then $\D+\tau$ is a Lie derivation if and only if $\tau$ vanishes at commutators (i.e.  $\tau([a,b])=0,$ for all $a,b\in\A$).  Lie derivations of this form are called proper. Clearly   a Lie derivation $\K$ is  proper if and only if  $\K=\D+\tau$ for some derivation $\D$ and  a linear center valued  map $\tau$ on $\A$  (i.e. $\tau(\A)\subseteq Z(\A)$).
The fundamental question  is  that under what conditions a Lie  derivation on an algebra is proper.
We say that an algebra $\A$ has ``{\it Lie Derivation Property}'' if every Lie derivation from  $\A$ into itself is proper.

In this paper we study the Lie derivation property  for the general matrix algebras. First we briefly introduce a general matrix algebra. A Morita  context  $(\A, \B, \M, \N, \Phi_{\M\N}, \Psi_{\N\M})$ consists of two unital algebras $\A$, $\B$, an $(\A,\B)-$module $\M$, a $(\B,\A)-$module $\N$, and two module homomorphisms  $\Phi_{\M\N}:\M\otimes_\B \N\longrightarrow \A$ and $\Psi_{\N\M}:\N\otimes_\A \M\longrightarrow \B$ satisfying the following commutative diagrams:
\begin{equation*}\label{Dia}\begin{CD}
\M\otimes_\B\N\otimes_\A\M @ >\Phi_{\M\N}\otimes I_\M >> \A\otimes_\A \M\\
@ VV I_\M\otimes\Psi_{\N\M} V @ VV\cong V\\
\M\otimes_\B\B @ >\cong >> \M
\end{CD}
\end{equation*}
and
\begin{equation*}\begin{CD}
\N\otimes_\A\M\otimes_\B\N @ > \Psi_{\N\M}\otimes I_\N >> \B\otimes_\B\N\\
@ VV I_\N\otimes\Phi_{\M\N} V @ VV\cong V\\
\N\otimes_\A\A @ > \cong >>\N.
\end{CD}
\end{equation*}
For a Morita context  $(\A, \B, \M, \N, \Phi_{\M\N},\Psi_{\N\M})$, the set
\[\mathcal{G}=\left(
\begin{array}{cc}
\A & \M \\
\N & \B \\
\end{array}
\right)=\Bigg\{\left(
\begin{array}{cc}
a & m \\
n & b \\
\end{array}
\right)\Big| a\in \A, m\in \M, n\in \N, b\in \B\Bigg\}\]
forms an algebra under the usual matrix operations, where at least one of two modules $\M$
and $\N$ is distinct from zero. The algebra $\G$  is called a generalized matrix algebra. In the case where  $\N=0$, $\mathcal{G}$ becomes the so-called triangular algebra ${\rm Tri}(\A,\M,\B),$ whose Lie derivation property is  investigated by Cheung \cite{CH1}. Generalized matrix algebras were  first introduced by Sands  \cite{S}, where he studied various radicals of algebras occurring in Morita contexts.

A direct verification reveals that the center $Z(\mathcal{G})$ of $\mathcal{G}$ is
\[Z(\mathcal{G})=\{a\oplus b| a\in Z(\A), b\in Z(\B),\ am=mb, na=bn \quad{\rm for\ all} \quad m\in \M, n\in \N\},\]
where $a\oplus b=\left(
\begin{array}{cc}
a & 0 \\
0 & b \\
\end{array}
\right)\in \mathcal{G}.$ We also define two natural projections $\pi_\A:\mathcal{G}\longrightarrow \A$ and $\pi_\B:\mathcal{G}\longrightarrow \B$ by
\[\pi_\A:\left(
         \begin{array}{cc}
           a & m \\
           n & b \\
         \end{array}
       \right)\mapsto a
 \quad {\rm and}\quad  \pi_\B:\left(
         \begin{array}{cc}
           a & m \\
           n & b \\
         \end{array}
       \right)\mapsto b.\]\\
It is clear that, $\pi_\A(Z(\mathcal{G}))\subseteq Z(\A)$ and $\pi_\B(Z(\mathcal{G}))\subseteq Z(\B)$. Further, similar to \cite[Proposition 3]{CH1} one can show that,  if  $\M$ is a faithful $(\A,\B)-$module (i.e. $a\M=0$  necessities $a=0$ and $\M b=0$  necessities $b=0$), then there exists a unique algebra isomorphism
 \[\varphi:\pi_\A(Z(\mathcal{G}))\longrightarrow \pi_\B(Z(\mathcal{G}))\] such that $am=m\varphi(a)$ and $\varphi(a)n=na$ for all $m\in \M$, $n\in \N$; or equivalently, $a\oplus\varphi(a)\in Z(\G)$ for all $a\in\A.$

 Martindale \cite{Ma} was the first one who investigated the Lie derivation property of   certain primitive rings.
Cheung \cite{CH2} initiated the study of various mappings on triangular algebras; in particular, he  studied
the Lie derivation property for  triangular algebras in  \cite{CH1}. Following  his work \cite{CH1}, Lie derivations on a wide variety of algebras have been studied by many authors (see \cite{B, Br, DW, LW, LL, LJ, Mk, ME, W, WW} and the references therein). The main result of Cheung \cite{CH1} has recently extended by Du and Wang \cite{DW} for a generalized matrix algebra.  They showed that \cite[Theorem 1]{DW} a general matrix algebra $\mathcal{G}$
with $\M$ faithful enjoys the Lie derivation property  if  $\pi_{\A}(Z(\mathcal{G}))=Z(\A), \pi_{\B}(Z(\mathcal{G}))=Z(\B)$, and
   either $\A$ or $\B$ does not contain nonzero central ideals.
This result  developed for Lie $n-$derivation in \cite[Theorem 1]{WW}, (see also \cite[Theorem 2.1]{W}).

In this paper we use the construction  of Lie derivations of a generalized matrix algebra $\G$ (Proposition \ref{F})  to give a criterion for properness  Lie derivations on $\G$ (see Theorem \ref{HH} and Corollary \ref{H}). We then deduce not only, as a byproduct, the aforementioned result of Du and Wang (Theorem \ref{ideal}) but also we provide some alternative sufficient conditions ensuring the Lie derivation property for $\G$ (Theorems \ref{domain}, \ref{strong}). We then state our main result, Theorem \ref{main},  giving some sufficient conditions assuring the Lie derivation property for a generalized matrix algebra. In the last section, we include some applications of our results to some special generalized matrix algebras such as triangular algebras,  the full matrix algebras and the algebras of operators on a Banach space.
\section{\color{SEC} Proper Lie derivations}
From now on, we  assume that the modules $\M$ and $\N$ appeared in the generalized matrix algebra $\G$ are $2-$torion free. Recall that  a module $\M$ is called $2-$torsion free if $2m=0$ implies $m=0$ for any $m\in\M$.

We commence with the following    result providing  the construction of  (Lie) derivations of a generalized matrix algebra. It needs a standard argument, however Li and Wei {\cite{LW}} gave a complete proof for the first  two parts of it.
\begin{proposition}[]\label{F}
Let $\G=\left(\begin{array}{cc}
\mathcal{A} & \M\\
\N & \B\\
\end{array}\right)$  be a  generalized matrix algebra. Then

$\bullet$ {\rm({\cite[Proposition  4.1]{LW}}).} A linear mapping $\K$ on $\G$ is a Lie derivation if and only if it has the presentation
\begin{eqnarray}\label{gli}
\K\left(\begin{array}{cc}
a & m\\
n & b\\
\end{array}\right)=\left(\begin{array}{cc}
P(a)-mn_0-m_0n+h_\B(b) & am_0-m_0b+f(m)\\
n_0a-bn_0+g(n)& h_\A(a)+n_0m+nm_0+Q(b)\\
\end{array}\right),
\end{eqnarray}
for some $m_0 \in \mathcal{M}, n_0\in\mathcal{N}$ and some linear maps
$P:\mathcal{A}\longrightarrow\mathcal{A}, Q:\B\longrightarrow\B, f:\mathcal{M}\longrightarrow\mathcal{M},  g:\mathcal{N}\longrightarrow\mathcal{N},  h_\B:\B\longrightarrow Z(\mathcal{A})$ and
$h_\A:\mathcal{A}\longrightarrow Z(\B)$
satisfying the following conditions:
\begin{enumerate}[\hspace{1em}\rm (a)]
\item $P, Q$ are Lie derivations;
\item $h_\B([b,b'])=0,\ h_\A([a,a'])=0$;
\item $f(am)=P(a)m-mh_\A(a)+af(m),\ f(mb)=mQ(b)-h_\B(b)m+f(m)b$;
\item $g(na)=nP(a)-h_\A(a)n+g(n)a,\ g(bn)=Q(b)n-nh_\B(b)+bg(n)$;
\item $P(mn)-h_\B(nm)=mg(n)+f(m)n,\ Q(nm)-h_\A(mn)=g(n)m+nf(m)$;
\end{enumerate}
for all $a,a'\in \mathcal{A}, b,b'\in\B, m\in\M$ and $n\in\mathcal{N}$.

$\bullet$ {\rm ({\cite[Proposition 4.2]{LW}}).} A linear mapping $\D$ on $\G$ is a derivation if and only if it has the presentation
\begin{eqnarray}\label{gd}
\D\left(\begin{array}{cc}
a & m\\
n & b\\
\end{array}\right)=\left(\begin{array}{cc}
P'(a)-mn_0-m_0n & am_0-m_0b+f'(m)\\
n_0a-bn_0+g'(n) & n_0m+nm_0+Q'(b)\\
\end{array}\right),
\end{eqnarray}
where $m_0 \in \M,n_0\in \mathcal{N}, P':\mathcal{A}\longrightarrow \mathcal{A}, Q':\B\longrightarrow \B,
f':\M \longrightarrow\M$ and $g':\mathcal{N}\longrightarrow\mathcal{N}$
are linear maps satisfying the following conditions:
\begin{enumerate}[\hspace{1em}$\rm (a')$]
\item $P', Q'$ are  derivations;
\item $f'(am)=P'(a)m+af'(m),\ f'(mb)=mQ'(b)+f'(m)b$;
\item $g'(na)=nP'(a)+g'(n)a,\ g'(bn)=Q'(b)n +bg'(n)$;
\item $P'(mn)=mg'(n)+f'(m)n,\ Q'(nm)=g'(n)m+ nf'(m)$;
\end{enumerate}
 for all $a\in\mathcal{A}, b\in\B, m\in\M$ and $n\in\mathcal{N}$.

 $\bullet$ A linear mapping $\tau$ on $\G$  is center valued  and vanishes at commutators (i.e. $\tau(\mathcal{G})\subseteq Z(\mathcal{G})$ and $\tau([\mathcal{G},\mathcal{G}])=0$) if and only if $\tau$ has the  presentation
 \begin{eqnarray}\label{cv}
\tau \left(
  \begin{array}{cc}
    a & m \\
    n & b \\
  \end{array}
\right)
=
\left(
  \begin{array}{cc}
    \ell_\A(a)+h_\B(b) & 0 \\
    0                         & h_\A(a)+\ell_\B(b) \\
  \end{array}
\right),
\end{eqnarray}
where $\ell_\A:A\longrightarrow Z(A), h_\B:B\longrightarrow Z(A), h_\A:A\longrightarrow Z(B), \ell_\B:B\longrightarrow Z(B)$ are linear maps vanishing at commutators, having the  following properties:
\begin{enumerate}[\hspace{1em}\rm ($a''$)]
\item  $\ell_\A(a)\oplus h_\A(a)\in Z(\G)$ and  $h_\B(b)\oplus \ell_\B(b)\in Z(\G),$ for all $a\in\A, b\in\B;$
\item $ \ell_\A(mn)=h_\B(nm)$ and $h_\A(mn)=\ell_\B(nm),$ for all $m\in\M, n\in\N.$
\end{enumerate}
\end{proposition}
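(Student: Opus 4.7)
The plan is to handle the two directions separately. For the forward direction, I would first exploit the simple observation that every element of $Z(\G)$ is block-diagonal (of the form $x\oplus y$), so if $\tau$ is center-valued then the off-diagonal entries of $\tau(X)$ automatically vanish for every $X\in\G$. Decomposing an arbitrary element of $\G$ as
\[
\begin{pmatrix} a & m \\ n & b \end{pmatrix} = \begin{pmatrix} a & 0 \\ 0 & 0 \end{pmatrix} + \begin{pmatrix} 0 & m \\ 0 & 0 \end{pmatrix} + \begin{pmatrix} 0 & 0 \\ n & 0 \end{pmatrix} + \begin{pmatrix} 0 & 0 \\ 0 & b \end{pmatrix},
\]
I would then observe that the two ``off-diagonal'' pieces are themselves commutators: for instance
$\bigl(\begin{smallmatrix} 0 & m \\ 0 & 0 \end{smallmatrix}\bigr) = \bigl[\bigl(\begin{smallmatrix} 1 & 0 \\ 0 & 0 \end{smallmatrix}\bigr),\bigl(\begin{smallmatrix} 0 & m \\ 0 & 0 \end{smallmatrix}\bigr)\bigr]$ and similarly for the $\N$-entry. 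Since $\tau$ vanishes at commutators, these two terms are killed.  Thus by linearity $\tau$ is completely determined by its values on $\A\oplus 0$ and $0\oplus\B$, and we may define
\[
\ell_\A(a)\oplus h_\A(a):=\tau\bigl(\begin{smallmatrix} a & 0 \\ 0 & 0 \end{smallmatrix}\bigr),\qquad
h_\B(b)\oplus \ell_\B(b):=\tau\bigl(\begin{smallmatrix} 0 & 0 \\ 0 & b \end{smallmatrix}\bigr),
\]
which yields the presentation \eqref{cv}.

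Next I would read off the required properties of these four maps. Linearity of each is inherited from linearity of $\tau$; the membership $\ell_\A(a)\oplus h_\A(a)\in Z(\G)$ and $h_\B(b)\oplus\ell_\B(b)\in Z(\G)$ is immediate from $\tau(\G)\subseteq Z(\G)$, giving (a''); in particular $\ell_\A,\ell_\B$ take values in $Z(\A),Z(\B)$ and $h_\B,h_\A$ take values in $Z(\A),Z(\B)$ respectively. Vanishing at commutators of $\A$ and $\B$ comes from testing $\tau$ on $[a,a']\oplus 0=[a\oplus 0,a'\oplus 0]$ and $0\oplus[b,b']$. The equalities in (b'') come from the single key commutator
\[
\Bigl[\bigl(\begin{smallmatrix} 0 & m \\ 0 & 0 \end{smallmatrix}\bigr),\bigl(\begin{smallmatrix} 0 & 0 \\ n & 0 \end{smallmatrix}\bigr)\Bigr]=\begin{pmatrix} mn & 0 \\ 0 & -nm \end{pmatrix};
\]
applying $\tau$ and using the vanishing-at-commutators property gives $\ell_\A(mn)=h_\B(nm)$ and $h_\A(mn)=\ell_\B(nm)$ simultaneously.

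For the converse, given $\tau$ of the form \eqref{cv} with the stated properties, membership $\tau(X)\in Z(\G)$ follows directly from (a'') and the fact that $Z(\G)$ is closed under addition. To show $\tau$ vanishes on commutators one computes the general commutator
\[
\Bigl[\bigl(\begin{smallmatrix} a & m \\ n & b \end{smallmatrix}\bigr),\bigl(\begin{smallmatrix} a' & m' \\ n' & b' \end{smallmatrix}\bigr)\Bigr]
\]
and extracts its two diagonal entries, which are $[a,a']+mn'-m'n$ and $[b,b']+nm'-n'm$. Applying $\tau$ and using the vanishing of $\ell_\A,\ell_\B,h_\A,h_\B$ on $[a,a'],[b,b']$ together with (b'') collapses everything to zero. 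I do not expect a serious obstacle here: the whole argument is essentially an algebraic bookkeeping exercise once one recognizes the two guiding tricks (center-valuedness kills off-diagonals, and the single commutator $[E_m,E_n]$ encodes both identities in (b'')); the only mildly delicate point is making sure the decomposition of $\tau$ is unambiguous, which is automatic from linearity.
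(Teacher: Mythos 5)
Your argument for the third bullet is correct and is essentially the ``direct verification'' the paper has in mind: reduce $\tau$ to its values on the diagonal corners, then extract conditions (a$''$) and (b$''$) from centrality and from the commutators $[a\oplus 0,a'\oplus 0]$, $[0\oplus b,0\oplus b']$ and $[E_m,E_n]=mn\oplus(-nm)$. Your route to the reduction is slightly slicker than the paper's draft computation (which starts from a fully general eight-component ansatz for a map into $Z(\G)$ and kills the unwanted dependencies by specializing entries in the general commutator): you simply note that the strictly upper and strictly lower corners of $\G$ consist of commutators, so a map vanishing at commutators is determined by its restriction to $\A\oplus 0$ and $0\oplus\B$, while center-valuedness forces block-diagonal outputs. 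All the individual verifications check out, including the converse, where (b$''$) makes the two diagonal entries of $\tau$ applied to a general commutator telescope to zero. The only caveat is that you prove nothing about the first two bullets (the presentations \eqref{gli} and \eqref{gd} of Lie derivations and derivations); the paper does not prove these either, citing \cite[Propositions 4.1, 4.2]{LW}, so this is consistent with the paper's treatment, but your proposal should say explicitly that those parts are being taken from the literature rather than silently omitting them.
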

Following the method of  {\cite[Theorem 6]{CH1}}, in the next theorem we give a necessary and sufficient condition for a Lie derivation on a generalized matrix algebra $\G$ to be proper.
\begin{theorem}\label{HH}
Let $\mathcal{G}$ be a generalized matrix algebra.
A Lie derivation $\K$ on $\mathcal{G}$   of the form presented in \eqref{gli}
is proper if and only if there exist linear  mappings
$\ell_\A:\A \longrightarrow Z(\A) $ and
 $\ell_\B:\B \longrightarrow Z(\B) $
  satisfying the following conditions:
 \begin{enumerate}[\hspace{3em}$\rm (A)$]
    \item $P-\ell_\A$ and $Q-\ell_\B$ are derivations on $\A$ and $\B$, respectively;
    \item   $\ell_\A(a)\oplus h_\A(a)\in Z(\G)$ and  $h_\B(b)\oplus \ell_\B(b)\in Z(\G),$ for all $a\in\A, b\in\B;$
    \item $\ell_\A(mn)=h_\B(nm)$ and $\ell_\B(nm)=h_\A(mn),$  for all $ m \in\M, n\in\N.$
\end{enumerate}
\end{theorem}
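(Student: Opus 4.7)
The plan is to exploit the three structural presentations in Proposition~\ref{F} and match data across a putative decomposition $\K = \D + \tau$. The conceptual point is that conditions (B) and (C) are precisely what is needed to redistribute the ``central pieces'' $h_\A, h_\B$ appearing in the Lie derivation presentation (\ref{gli}) between a genuine derivation and a center-valued summand that kills commutators.

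For the necessity direction, I would suppose $\K = \D + \tau$ with $\D$ a derivation and $\tau$ center-valued vanishing at commutators. By the second and third bullets of Proposition~\ref{F}, $\D$ admits the presentation (\ref{gd}) and $\tau$ admits the presentation (\ref{cv}). Writing $\D + \tau$ as a single matrix and comparing entry-by-entry with (\ref{gli}) forces the data of (\ref{cv}) to be of the form $(\ell_\A, h_\B, h_\A, \ell_\B)$ (reusing the very $h_\A, h_\B$ attached to $\K$), and the data of (\ref{gd}) to be $(P - \ell_\A, Q - \ell_\B, f, g, m_0, n_0)$. The first identifications immediately give (A), while (B) and (C) are just $(a'')$ and $(b'')$ rewritten for these $\ell_\A, \ell_\B$.

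For sufficiency, given $\ell_\A, \ell_\B$ satisfying (A)--(C), I would \emph{define} a candidate $\D$ via (\ref{gd}) with $P' := P - \ell_\A$, $Q' := Q - \ell_\B$, $f' := f$, $g' := g$, $m_0' := m_0$, $n_0' := n_0$, and a candidate $\tau$ via (\ref{cv}) using the given $\ell_\A, \ell_\B$ together with the $h_\A, h_\B$ from $\K$'s presentation. By construction $\D + \tau$ reproduces (\ref{gli}) entry-by-entry, so $\K = \D + \tau$; it then remains to verify that $\D$ satisfies $(a')$--$(d')$ and that $\tau$ satisfies $(a'')$--$(b'')$ with $\ell_\A, h_\B, h_\A, \ell_\B$ each vanishing on commutators. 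The check on $\tau$ is immediate: $(a'')$ is (B), $(b'')$ is (C), $h_\A$ and $h_\B$ annihilate commutators by hypothesis (b) of (\ref{gli}), and $\ell_\A = P - P'$ is a difference of two Lie derivations with image in $Z(\A)$, so $\ell_\A([a,a']) = [\ell_\A(a), a'] + [a, \ell_\A(a')] = 0$, and likewise for $\ell_\B$.

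The substantive verification is that $\D$ is a derivation, and this is where (B) and (C) earn their keep. Condition $(a')$ is exactly (A). For $(b')$, the target identity $f(am) = (P(a) - \ell_\A(a)) m + a f(m)$ differs from (c) of (\ref{gli}) by exactly $\ell_\A(a) m - m h_\A(a)$, which vanishes by the centrality of $\ell_\A(a) \oplus h_\A(a)$ in (B); the second half of $(b')$ and both halves of $(c')$ are dispatched symmetrically using the remaining swap identities supplied by (B). For $(d')$, subtracting yields $P(mn) - \ell_\A(mn) = m g(n) + f(m) n$ and $Q(nm) - \ell_\B(nm) = g(n) m + n f(m)$, which match (e) of (\ref{gli}) precisely when $\ell_\A(mn) = h_\B(nm)$ and $\ell_\B(nm) = h_\A(mn)$, i.e.\ under (C). The main obstacle will be organizational rather than conceptual: each of the eight algebraic identities behind $(b')$--$(d')$ must be paired with the correct swap coming from (B) or (C), and one must be scrupulous not to conflate $\ell_\A$ with $h_\A$, since they live on different algebras and are linked only through the centrality condition.
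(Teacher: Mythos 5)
Your proposal is correct and follows essentially the same route as the paper: both directions decompose $\K=\D+\tau$ via the three presentations of Proposition \ref{F}, identify $\ell_\A=P-P'$ and $\ell_\B=Q-Q'$, and read off (A)--(C) from the conditions $(a')$, $(a'')$, $(b'')$. You merely spell out the ``direct verification'' steps (matching (c)--(e) of \eqref{gli} against $(b')$--$(d')$ using the centrality swaps) that the paper leaves implicit.
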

\begin{proof} For the sufficiency, employing the characterization of $\K$ as in \eqref{gli},  we define $\D$ and $\tau$ by
$$\D\left(
  \begin{array}{cc}
    a & m \\
    n & b \\
  \end{array}
\right)
=
\left(
  \begin{array}{cc}
    (P-\ell_\A)(a)-mn_0-m_0 n & am_0-m_0 b+f(m) \\
    n_0a-bn_0+g(n) & n_0 m+nm_0+(Q - \ell_\B)(b) \\
  \end{array}
\right)
$$
and
$$\tau \left(
  \begin{array}{cc}
    a & m \\
    n & b \\
  \end{array}
\right)
=
\left(
  \begin{array}{cc}
    \ell_\A(a)+h_\B(b) & 0 \\
    0                         & h_\A(a)+\ell_\B(b) \\
  \end{array}
\right)
.$$
Then, by Proposition \ref{F},  a direct verification reveals  that  $\D$ is a derivation, $\tau$ is center valued  and  $\K=\D+\tau.$

 For the necessity, suppose that $\K$ is of the form $\D+\tau $, where $\D$ is a derivation and $\tau$ maps into
$Z(\mathcal{G}) $. Applying the presentations \eqref{gli}, \eqref{gd} for $\K$ and $\D$, respectively,  we get $\tau=\K-\D$ as
\[\tau\left(\begin{array}{cc}
a & m\\
n & b
\end{array}\right)=\left(\begin{array}{cc}
(P-P')(a)+h_\B(b) & 0 \\
  0 & h_\A(a)+(Q-Q')(b)
\end{array}\right).\]
By setting $\ell_\A=P-P', \ell_\B=Q-Q',$ a direct verification shows that $\ell_\A, \ell_\B$ are our desired maps satisfying the required conditions.
\end{proof}
\begin{remark}\label{faith}  In the case where $\M$ is  faithful as an $(\A,\B)-$module, then  we  have the following  simplifications in Proposition \ref{F} and Theorem \ref{HH}:
\begin{enumerate}[\hspace{1em}\rm (i)]
\item In Proposition \ref{F}, the condition (b)  becomes redundant as it can be followed by (a) and (c). Indeed, for $a, a'\in\A, m\in\M,$ from (c) we get
\begin{equation}\label{2}
f([a,a']m)=P([a,a'])m-mh_\A([a,a'])+[a,a']f(m).
\end{equation}
On the other hand, employing  (c) and then (a), we have
\begin{eqnarray}\label{22}
f([a,a']m)&=&f(aa'm-a'am)\notag\\
&=&P(a)a'm-a'mh_\A(a)+af(a'm)-\big(P(a')am-amh_\A(a')+a'f(am)\big)\notag\\
&=&P(a)a'm-a'mh_\A(a)+a(P(a')m-mh_\A(a')+a'f(m))\notag\\
&&-\big(P(a')am-amh_\A(a')+a'(P(a)m-mh_\A(a)+af(m))\big)\notag\\
&=&[P(a), a']m+[a,P(a')]m+[a,a']f(m)\notag\\
&=&P([a,a'])m+[a,a']f(m).
\end{eqnarray}
Comparing the equations \eqref{2} and \eqref{22} reveals that $mh_\A([a,a'])=0,$ for every $m\in\M,$ and the  faithfulness of $\M$ (as a right $\B-$module) implies that $h_\A([a,a'])=0,$ as claimed.
\item In Proposition \ref{F}, the condition $\rm (a')$ can  be dropped as it can be derived from $\rm (b')$ by a similar argument as in (i), (see {\cite[Page 303]{CH1}}).
\item  The same reason  as in (ii) shows that    the condition $\rm (A)$ in Theorem \ref{HH}, stating that $P-\ell_\A$ and $Q-\ell_\B$ are derivations,   is  superfluous.
\end{enumerate}
\end{remark}
 As a consequence of Theorem \ref{HH} we have the following criterion characterizing  Lie derivation property for a generalized matrix algebra $\G.$
\begin{corollary}\label{H}
Let $\mathcal{G}$ be a generalized matrix algebra and let
 $\K$ be a Lie derivation on $\mathcal{G}$  of the form presented in \eqref{gli}. If $\K$ is proper then
\begin{enumerate}[\hspace{1em}$\rm (A')$]
  \item $h_\A(\A) \subseteq \pi_\B(Z(\mathcal{G}))$, $h_\B(\B)\subseteq \pi_\A(Z(\mathcal{G})),$ and
  \item $h_\B(nm)\oplus h_\A(mn) \in Z(\mathcal{G}),$ for all $m\in \M, n\in \N.$
\end{enumerate}
The converse   holds true  in the case when $\M$ is faithful.
\end{corollary}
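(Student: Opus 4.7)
The plan is to use Theorem \ref{HH} as the pivot: translate its abstract conditions (A), (B), (C) into the concrete form $(A')$, $(B')$, and then reverse the translation under the faithfulness hypothesis.

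For the necessity, I assume $\K$ is proper, so Theorem \ref{HH} supplies linear maps $\ell_\A : \A \to Z(\A)$ and $\ell_\B : \B \to Z(\B)$ satisfying conditions (A)--(C). Condition (B) says $\ell_\A(a) \oplus h_\A(a) \in Z(\G)$ and $h_\B(b) \oplus \ell_\B(b) \in Z(\G)$, so immediately $h_\A(a) \in \pi_\B(Z(\G))$ and $h_\B(b) \in \pi_\A(Z(\G))$, yielding $(A')$. For $(B')$, I apply (C) to rewrite the first coordinate: since $\ell_\A(mn) = h_\B(nm)$, the central element $\ell_\A(mn) \oplus h_\A(mn)$ furnished by (B) is exactly $h_\B(nm) \oplus h_\A(mn)$, giving $(B')$.

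For the sufficiency (assuming $\M$ faithful), the key tool is the unique algebra isomorphism $\varphi : \pi_\A(Z(\G)) \to \pi_\B(Z(\G))$ recalled in the introduction, which is characterized by $a \oplus \varphi(a) \in Z(\G)$. Using $(A')$, I define
\[
\ell_\A(a) = \varphi^{-1}\bigl(h_\A(a)\bigr), \qquad \ell_\B(b) = \varphi\bigl(h_\B(b)\bigr).
\]
These are linear maps into $Z(\A)$ and $Z(\B)$ respectively (since $\pi_\A(Z(\G)) \subseteq Z(\A)$ and similarly for $\B$), and by construction they satisfy condition (B) of Theorem \ref{HH}.

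To verify (C), I apply $\varphi$ to the identity $\ell_\A(mn) \oplus h_\A(mn) \in Z(\G)$ (which holds by the definition of $\ell_\A$) and compare with $(B')$, which states $h_\B(nm) \oplus h_\A(mn) \in Z(\G)$. Since the map $a \mapsto \varphi(a)$ is a \emph{bijection} (this is where faithfulness of $\M$ is essential, through the uniqueness of $\varphi$), and both $\ell_\A(mn)$ and $h_\B(nm)$ pair with the same second coordinate $h_\A(mn)$ to give elements of $Z(\G)$, they must coincide; similarly $\ell_\B(nm) = h_\A(mn)$. Finally, condition (A) of Theorem \ref{HH}, requiring that $P - \ell_\A$ and $Q - \ell_\B$ are derivations, is automatic under faithfulness by Remark \ref{faith}(iii). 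Thus Theorem \ref{HH} applies and $\K$ is proper.

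The only delicate point is the appeal to the uniqueness built into $\varphi$ when extracting $\ell_\A(mn) = h_\B(nm)$; everything else is a direct rereading of Theorem \ref{HH} through the projections $\pi_\A, \pi_\B$. I do not anticipate a serious obstacle, since the corollary is essentially a reformulation of Theorem \ref{HH} that eliminates the auxiliary $\ell_\A, \ell_\B$ in favor of the more intrinsic data $h_\A, h_\B$ already present in the structural form \eqref{gli}.
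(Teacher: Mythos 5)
Your argument is correct and follows essentially the same route as the paper: the necessity is read off from conditions (B) and (C) of Theorem \ref{HH}, and the sufficiency defines $\ell_\A=\varphi^{-1}\circ h_\A$, $\ell_\B=\varphi\circ h_\B$ via the isomorphism $\varphi$ furnished by faithfulness, deduces condition (C) from $(B')$ through the characterization $a\oplus\varphi(a)\in Z(\G)$, and discharges condition (A) by Remark \ref{faith}(iii). No gaps.
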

\begin{proof}
The necessity follows trivially from Theorem \ref{HH}. For the sufficiency, suppose that $\M$ is faithful.  Let $\varphi:\pi_\A(Z(\mathcal{G}))\longrightarrow \pi_\B(Z(\mathcal{G}))$ be the isomorphism satisfying   $a\oplus\varphi(a)\in Z(\G)$ for all $a\in\A,$  whose existence guaranteed by the faithfulness of $\M.$ By virtue of $\rm (A')$, we   define $\ell_\A:\A  \longrightarrow Z(\A)$ and
$\ell_\B:\B \longrightarrow Z(\B)$  by $\ell_\A=\varphi^{-1}\circ h_\A$ and
$\ell_\B= \varphi\circ h_\B$. It is  obvious that  $\ell_\A(a)\oplus h_\A(a)\in Z(\G)$ and  $h_\B(b)\oplus \ell_\B(b)\in Z(\G),$ for all $a\in\A, b\in\B.$
 Further, $\rm (B')$ follows  that  \[\ell_\A(mn)=\varphi^{-1}(h_\A(mn))=h_\B(nm) \ {\rm  and}\ \ell_\B(nm)=\varphi(h_\B(nm))=h_\A(mn).\]
Now Theorem \ref{HH} together with Remark \ref {faith} (iii) confirm that $\K$ is proper, as required.
\end{proof}
\section{\color{SEC}Some sufficient conditions}
 We commence with the following result which  employs  Corollary \ref{H} to give a proof for a modification of  the main result of  Du and Wang \cite{DW}.  See also \cite[Corollary 1]{WW} and examine \cite[Theorem 2.1]{W} for $n=2$.
\begin{theorem}[{\cite[Theorem 1]{DW}}]\label{ideal}
Let $\mathcal{G}$ be a generalized matrix algebra with  faithful $\M$.
Then  $\mathcal{G}$  has Lie derivation property if
\begin{enumerate}[\hspace{1em}\rm (i)]
  \item $\pi_{\A}(Z(\mathcal{G}))=Z(\A), \pi_{\B}(Z(\mathcal{G}))=Z(\B)$, and
  \item either $\A$ or $\B$ does not contain nonzero central ideals.
\end{enumerate}
\end{theorem}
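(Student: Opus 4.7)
\medskip
\noindent\emph{Plan of proof.} The strategy is to invoke Corollary~\ref{H}: since $\M$ is faithful, it suffices to check that conditions (A$'$) and (B$'$) are automatic for any Lie derivation $\K$ on $\G$ expressed in the canonical form \eqref{gli} of Proposition~\ref{F}.

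Condition (A$'$) comes for free. By Proposition~\ref{F} the maps in the decomposition \eqref{gli} already satisfy $h_\A\colon\A\to Z(\B)$ and $h_\B\colon\B\to Z(\A)$, and hypothesis (i) identifies $\pi_\A(Z(\G))=Z(\A)$ and $\pi_\B(Z(\G))=Z(\B)$; hence $h_\A(\A)\subseteq\pi_\B(Z(\G))$ and $h_\B(\B)\subseteq\pi_\A(Z(\G))$ at once.

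The substantive work is condition (B$'$), namely $h_\B(nm)\oplus h_\A(mn)\in Z(\G)$ for all $m\in\M$ and $n\in\N$. Thanks to (i) together with the faithfulness of $\M$, the centre-isomorphism $\varphi\colon\pi_\A(Z(\G))\to\pi_\B(Z(\G))$ recalled in the introduction is defined on all of $Z(\A)$, and (B$'$) is equivalent to the single identity $\varphi(h_\B(nm))=h_\A(mn)$. To produce this I would evaluate $f(mnm')$ in the two associative ways, as $f((mn)m')$ and as $f(m(nm'))$, using Proposition~\ref{F}(c), and then rewrite the resulting $P(mn)$ and $Q(nm')$ via Proposition~\ref{F}(e). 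A careful cancellation should collapse everything to
\[h_\B(nm)\,m'-m'\,h_\A(mn) \;=\; m\,h_\A(m'n)-h_\B(nm')\,m\qquad(m,m'\in\M,\ n\in\N).\]

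Using $a\,m=m\,\varphi(a)$ for $a\in Z(\A)$ and setting $z(m,n):=h_\B(nm)-\varphi^{-1}(h_\A(mn))\in Z(\A)$, the displayed identity rearranges to $z(m,n)\,m'=-z(m',n)\,m$. Specialising $m=m'$ gives $2\,z(m,n)\,m=0$, so by the standing $2$-torsion freeness of $\M$ we obtain $z(m,n)\,m=0$; feeding this back delivers $z(m,n)\,m'=0$ for every $m'\in\M$, and the faithfulness of $\M$ as a left $\A$-module then forces $z(m,n)=0$. The symmetric centrality condition $n'\,h_\B(nm)=h_\A(mn)\,n'$ drops out from $\varphi(a)\,n'=n'\,a$. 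The main obstacle is organising the two-way evaluation of $f(mnm')$ cleanly enough that the key identity emerges without clutter; hypothesis~(ii) on the absence of nonzero central ideals in $\A$ or $\B$ provides the natural alternative lever for eliminating $z(m,n)$ and keeps the argument aligned with the original approach of Du and Wang~\cite{DW}.
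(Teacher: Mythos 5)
Your reduction to Corollary~\ref{H} and the verification of $\rm (A')$ from hypothesis (i) are fine, and the identity you extract from the two-way evaluation of $f(mnm')$, namely $h_\B(nm)m'-m'h_\A(mn)=mh_\A(m'n)-h_\B(nm')m$, is correct. The gap is in the concluding step. Writing $z(m,n)=h_\B(nm)-\varphi^{-1}(h_\A(mn))\in Z(\A)$, your identity reads $z(m,n)m'=-z(m',n)m$, which is exactly the statement that the map $(m,m')\mapsto z(m,n)m'$ is antisymmetric. Setting $m=m'$ and dividing by $2$ gives $z(m,n)m=0$, but that is just the diagonal consequence of antisymmetry; ``feeding it back'' returns only $z(m,n)m'+z(m',n)m=0$, which is what you started with, and does \emph{not} yield $z(m,n)m'=0$ for all $m'$. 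If this step were valid, faithfulness and $2$-torsion freeness alone would finish the proof and hypothesis (ii) would be superfluous --- but whether (ii) can be dropped is precisely what the authors leave open (it is the content of their Theorems~\ref{domain} and~\ref{strong}, which impose extra hypotheses exactly to get from $z(m,n)m=0$ to $z(m,n)=0$). So the main line of your argument proves more than is true, or at least more than is known.

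To repair it you must actually use (ii), which you only gesture at. One way, staying with your identity: replace $m$ by $am$ to get $z(am,n)m'=-z(m',n)(am)=-\bigl(az(m',n)\bigr)m=\bigl(az(m,n)\bigr)m'$ (using centrality of $z(m',n)$ and then antisymmetry), whence left-faithfulness of $\M$ gives $z(am,n)=az(m,n)$; thus $\A z(m,n)=\{z(am,n):a\in\A\}\subseteq Z(\A)$ is a central ideal, which vanishes by (ii), and $1\in\A$ forces $z(m,n)=0$. This is essentially the paper's route: there the authors apply the derivation property of $P-\ell_\A$ together with Proposition~\ref{F}(c),(e) to obtain $a\gamma(mn,-nm)=\gamma(amn,-nam)$ directly, conclude that $\A\gamma(mn,-nm)$ is a central ideal, and invoke (ii). Without some such use of (ii) (or of a ``no zero divisors''/strong faithfulness hypothesis as in Theorems~\ref{domain} and~\ref{strong}), the argument does not close.
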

\begin{proof}
 Let $\K$ be a Lie derivation of the form presented in \eqref{gli}. From Corollary \ref{H}, as (i) implies $\rm (A')$, we only need  to show that $h_\B(nm)\oplus h_\A(mn)\in Z(\mathcal{G})$ for all $m\in\M, n\in\N.$ Without loss of generality suppose that $\A$ has no nonzero central ideals. Set
\[\gamma(a,b)=\ell_\A(a)+h_\B(b)\quad (a\in\A, b\in\B),\]
where, as in the proof of \ref{H},  $\ell_\A=\varphi^{-1}\circ h_\A.$ Then $p_\A=P-\ell_\A$ is a derivation. Now Proposition \ref{F} (e) implies that
\[p_\A(mn)=mg(n)+f(m)n-\gamma(mn,-nm)\quad (m\in\M, n\in\N),\]
so for each $a\in\A,\ \  p_\A(amn)-amg(n)-f(am)n=-\gamma(amn,-nam).$
By the latter identity and the fact that $p_\A$ is a derivation we  get
$$p_\A(a)mn+ap_\A(mn)-amg(n)-p_\A(a)mn-af(m)n=-\gamma(amn,-nam).$$
These relations follow that
$a\gamma(mn,-nm)= \gamma(amn,-nam),$
for  $a\in\A, m\in\M, n\in \N$. Therefore for any two elements $m\in\M, n\in\N$  the set $\A\gamma(mn,-nm)$
is a central ideal of $\A$. Thus $\ell_\A(mn)-h_\B(nm)=\gamma(mn,-nm)=0$ and so
$h_\B(nm)\oplus h_\A(mn)=\ell_\A(mn)\oplus h_\A(mn) \in Z(\mathcal{G})$, as claimed.
\end{proof}
 For an example of an algebra satisfying condition (ii) of Theorem \ref{ideal}, one can directly show  that every noncommutative unital prime algebra with a nontrivial idempotent  does not contain nonzero central ideals. In particular,  B(X), the algebra of operators on a Banach space $X$ when the dimension $X$ is grater that 1,  does not contain central ideal. The full matrix matrix algebra $M_n(A)$, $n\geq 2$, has also no nonzero central ideal, \cite[Lemma 1]{DW}. In \cite[Theorem  2]{DW} they also showed that in a generalized matrix algebra $\G$ with loyal  $\M$ if $\A$ is noncommutative then $\A$ has no central ideals.

In the next result, we provide  some new sufficient conditions assuring the Lie derivation property for $\G$. We say that  an algebra $\A$ is a domain if it has no zero devisors if $aa'=0$ implies $a=0$ or $a'=0$ for any two elements $a,a'\in\A.$
\begin{theorem}\label{domain}
Let $\mathcal{G}$ be a generalized matrix algebra with faithful  $\M$.
Then  $\mathcal{G}$  has Lie derivation property if
\begin{enumerate}[\hspace{1em}\rm (i)]
  \item $\pi_{\A}(Z(\mathcal{G}))=Z(\A),  \pi_{\B}(Z(\mathcal{G}))=Z(\B)$, and
  \item $\A$ and $\B$ are domain.
\end{enumerate}
\end{theorem}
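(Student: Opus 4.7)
My plan is to verify conditions $(A')$ and $(B')$ of Corollary~\ref{H}. Condition $(A')$ is automatic: Proposition~\ref{F} already places $h_\A(\A)\subseteq Z(\B)$ and $h_\B(\B)\subseteq Z(\A)$, and hypothesis~(i) identifies these with $\pi_\B(Z(\G))$ and $\pi_\A(Z(\G))$, respectively. The whole work is therefore to verify $(B')$. Using the algebra isomorphism $\varphi:\pi_\A(Z(\G))\to\pi_\B(Z(\G))$ provided by the faithfulness of $\M$, I set $\ell_\A:=\varphi^{-1}\circ h_\A:\A\to Z(\A)$ and form
\[
\gamma(m,n):=\ell_\A(mn)-h_\B(nm)\in Z(\A).
\]
Since $\ell_\A(a)\oplus h_\A(a)\in Z(\G)$ for every $a\in\A$, condition $(B')$ is equivalent to showing $\gamma\equiv 0$.

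First, I would reprise the argument from the proof of Theorem~\ref{ideal}. By Remark~\ref{faith}(iii), $p_\A:=P-\ell_\A$ is a derivation, and Proposition~\ref{F}(e) rewrites as $p_\A(mn)=mg(n)+f(m)n-\gamma(m,n)$. Comparing the two expansions $p_\A(amn)=p_\A(a)mn+a\,p_\A(mn)$ and $p_\A((am)n)=am\,g(n)+f(am)n-\gamma(am,n)$, together with $f(am)=p_\A(a)m+af(m)$ from Proposition~\ref{F}(c) (after converting $mh_\A(a)$ to $\ell_\A(a)m$ via the $\varphi$-relation), yields the left $\A$-linearity
\[
a\,\gamma(m,n)=\gamma(am,n)\qquad(a\in\A,\ m\in\M,\ n\in\N).
\]
The key new step is to squeeze a second identity out of the Morita associativity $(m_1n)m_2=m_1(nm_2)$. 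Expanding $f((m_1n)m_2)$ via the first half of Proposition~\ref{F}(c) with $a=m_1n$, and $f(m_1(nm_2))$ via the second half with $b=nm_2$, substituting the formulas for $P(m_1n)$ and $Q(nm_2)$ from Proposition~\ref{F}(e), and converting $mh_\A(a)$ to $\ell_\A(a)m$ and $h_\B(b)m$ to $m\,\varphi(h_\B(b))$ via the $\varphi$-relations, all the $P$, $Q$, $f$, $g$ terms cancel and one is left with the symmetry
\[
\gamma(m_1,n)m_2+\gamma(m_2,n)m_1=0.
\]
Taking $m_1=m_2=m$ and using that $\M$ is $2$-torsion free gives $\gamma(m,n)m=0$, and polarization gives $\gamma(m,n)m'=-\gamma(m',n)m$.

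Finally, applying left $\A$-linearity to the polarized identity produces
\[
\gamma(m,n)\,\gamma(m',n')=\gamma(\gamma(m,n)m',\,n')=-\gamma(\gamma(m',n)m,\,n')=-\gamma(m',n)\,\gamma(m,n'),
\]
and specializing $m'=m$, $n'=n$ yields $2\,\gamma(m,n)^2=0$. Since $\M$ is faithful and $2$-torsion free, $\A$ is automatically $2$-torsion free: if $2a=0$ in $\A$, then $2(am)=0$ for every $m\in\M$, hence $am=0$, and faithfulness forces $a=0$. Therefore $\gamma(m,n)^2=0$ in $\A$, and since $\A$ is a domain, $\gamma(m,n)=0$, giving $(B')$. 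The main technical obstacle is the cancellation leading to the symmetry identity in paragraph two; a pleasant feature of this approach is that it dispatches the commutative and noncommutative cases uniformly, bypassing the ``no nonzero central ideals'' dichotomy invoked in Theorem~\ref{ideal}.
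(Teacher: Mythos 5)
Your proposal is correct, but it diverges from the paper's proof at the decisive step. Both arguments reduce to Corollary \ref{H}, and both arrive at the relation $(h_\B(nm)-\ell_\A(mn))m=0$: the paper gets it by applying Proposition \ref{F}(c) with $a=mn$, $b=nm$ and comparing with (e) multiplied by $m$, which is exactly the diagonal case $m_1=m_2=m$ of your symmetry identity $\gamma(m_1,n)m_2+\gamma(m_2,n)m_1=0$ (I checked the cancellation: equating the two expansions of $f((m_1n)m_2)=f(m_1(nm_2))$ leaves $h_\B(nm_1)m_2-m_2h_\A(m_1n)=m_1h_\A(m_2n)-h_\B(nm_2)m_1$, and only the conversion $mh_\A(a)=\ell_\A(a)m$ is needed to reach your form). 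From there the paper finishes with a case analysis: it multiplies by a suitable $n_0\in\N$ on the right or left and uses that $\A$ is a domain on the product $mn_0$, or that $\B$ is a domain on $n_0m$, treating $m\N=\N m=0$ separately. You instead retain the full polarized identity, combine it with the left $\A$-linearity $a\gamma(m,n)=\gamma(am,n)$ imported from the proof of Theorem \ref{ideal}, and obtain $2\gamma(m,n)^2=0$, hence $\gamma(m,n)^2=0$ by the $2$-torsion-freeness of $\A$ (correctly deduced from that of the faithful module $\M$), so the domain hypothesis enters only through the absence of nonzero nilpotents. Your route buys a uniform argument with no case split and uses only that $\A$ is a domain, so it actually establishes a marginally stronger statement; the cost is that you must invoke Remark \ref{faith}(iii) to know $P-\ell_\A$ is a derivation and redo the linearity computation, neither of which the paper's proof of this theorem requires. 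One small inaccuracy in your commentary: the ``no nonzero central ideals'' dichotomy you say you bypass belongs to Theorem \ref{ideal}; the dichotomy in the paper's proof of Theorem \ref{domain} is on whether $m\N$ or $\N m$ vanishes.
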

\begin{proof} Let $\K$ be a Lie derivation of the form presented in \eqref{gli}.
From Corollary \ref{H}, we only need  to show that $h_\B(nm)\oplus h_\A(mn)\in Z(\mathcal{G})$.

Let $m\in\M, n\in\N.$ Using the identities in (c) of Proposition \ref{F} for $a=mn$ and $b=nm$ we get
\begin{equation}\label{f1}
P(mn)m-mh_\A(mn)+mnf(m)=mQ(nm)-h_\B(nm)m+f(m)nm.
\end{equation}
Multiplying  by $m$ the identities in  (e)  of Proposition \ref{F} we get
\begin{equation}\label{f2}
P(mn)m-h_\B(nm)m=mg(n)m+f(m)nm,\ {\rm and} \quad mQ(nm)-mh_\A(mn)=mg(n)m+mnf(m).
\end{equation}
Combining  the equations in  \eqref{f1} and  \eqref{f2} we get
 $2(h_\B(nm)m-mh_\A(mn))=0,$ so the $2-$torsion freeness of $\M$ implies that
 \begin{equation}\label{mn1}
 h_\B(nm)m=mh_\A(mn).
 \end{equation}
 The faithfulness of  $\M$  guaranties  the existence of an isomorphism  $\varphi:\pi_\A(Z(\mathcal{G}))\longrightarrow \pi_\B(Z(\mathcal{G}))$ satisfying   $a\oplus\varphi(a)\in Z(\G)$ for all $a\in\A.$ As  $\pi_{\B}(Z(\mathcal{G}))=Z(\B),$ we   can define $\ell_\A:\A  \longrightarrow Z(\A)$ and    $\ell_\B:\B  \longrightarrow Z(\B)$ by $\ell_\A=\varphi^{-1}\circ h_\A$ and $\ell_\B=\varphi\circ h_\B,$ respectively. It follows  that  $\ell_\A(a)\oplus h_\A(a)\in Z(\G)$  and  $h_\B(b)\oplus \ell_B(b)\in Z(\G)$ for all $a\in\A, b\in\B.$ It follows from   \eqref{mn1} that
  \begin{equation}\label{mn}
 (h_\B(nm)-\ell_\A(mn))m=0;
  \end{equation}
   or  equivalently,
 \begin{equation}\label{mnn}
 m(\ell_B(nm)-h_A(mn))=0.
 \end{equation}
 If $\N m=0$ and $m\N=0,$ then trivially $ h_\B(nm)-\ell_\A(mn)=0.$ Otherwise either   $m\N\neq0$  or  $\N m\neq0.$ Take  $0\neq n_0\in\N.$ If $mn_0\neq0$ then, as $\A$ is a domain, \eqref{mn} implies that $h_\B(nm)-\ell_\A(mn)=0;$ so
  \[h_\B(nm)\oplus h_\A(mn)=\ell_\A(mn)\oplus h_\A(mn) \in Z(\mathcal{G}).\]
  If $n_0m\neq0$  then, as $B$ is a domain,  \eqref{mnn}  implies that $\ell_B(nm)-h_A(mn)=0;$ so
 \[h_\B(nm)\oplus h_\A(mn)=h_\B(nm)\oplus \ell_B(nm) \in Z(\mathcal{G}).\]
 We therefore have
$h_\B(nm)\oplus h_\A(mn) \in Z(\mathcal{G})$ for all $m\in\M, n\in\N,$ as required.
\end{proof}
Here we impose a condition on $\M$ which is  slightly stronger than the faithfulness of $\M$. We say  an $(\A,\B)-$module $\M$ is strongly faithful if either
\begin{enumerate}[\hspace{1em}]
\item  $\M$ is faithful as a right $\B-$module and   $am=0$ implies  $a=0$ or $m=0$ for any $a\in\A, m\in\M$; or
\item $\M$ is faithful as a left $\A-$module and $mb=0$ implies  $m=0$ or $b=0$ for any $m\in\M, b\in\B.$
\end{enumerate}
It is evident that if $\M$ is strongly faithful then $\M$  is faithful and  either  $\A$ or $\B$ has no zero devisors. Following the proof of the above theorem we have the following result.
\begin{theorem}\label{strong}
A generalized matrix algebra $\G$ has  Lie derivation property if
\begin{enumerate}[\hspace{1em}\rm (i)]
  \item $\pi_{\A}(Z(\mathcal{G}))=Z(\A),  \pi_{\B}(Z(\mathcal{G}))=Z(\B)$, and
  \item  $\M$ is strongly faithful.
  \end{enumerate}
\end{theorem}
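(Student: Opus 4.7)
The plan is to mimic the proof of Theorem \ref{domain} almost verbatim, replacing the use of the domain hypothesis on $\A$ and $\B$ by the corresponding cancellation property furnished by strong faithfulness. Let $\K$ be a Lie derivation on $\G$ of the form \eqref{gli}. Since strong faithfulness of $\M$ in particular implies faithfulness, Corollary \ref{H} reduces our task to verifying that $h_\B(nm)\oplus h_\A(mn)\in Z(\G)$ for every $m\in\M$ and $n\in\N$. Faithfulness of $\M$ together with the hypothesis $\pi_\B(Z(\G))=Z(\B)$ and $\pi_\A(Z(\G))=Z(\A)$ again yields the isomorphism $\varphi:Z(\A)\to Z(\B)$ with $a\oplus\varphi(a)\in Z(\G)$, so we may introduce $\ell_\A=\varphi^{-1}\circ h_\A$ and $\ell_\B=\varphi\circ h_\B$ exactly as before.

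Next I would reproduce the calculation from \eqref{f1}--\eqref{mn1} in the proof of Theorem \ref{domain} word-for-word: combining the identity produced by Proposition \ref{F}(c) with $a=mn,\ b=nm$ with the two identities obtained by multiplying Proposition \ref{F}(e) on the left and right by $m$, then using the $2$-torsion freeness of $\M$, yields
\[
h_\B(nm)\,m \;=\; m\,h_\A(mn).
\]
Rewriting this via $\ell_\A$ and $\ell_\B$ gives the two equivalent forms
\[
\bigl(h_\B(nm)-\ell_\A(mn)\bigr)m=0 \qquad\text{and}\qquad m\bigl(\ell_\B(nm)-h_\A(mn)\bigr)=0.
\]

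At this point, instead of separating into cases $m\N=0,\ \N m=0$ and invoking the domain hypothesis on a chosen nonzero element $n_0$, I would simply invoke strong faithfulness of $\M$. If $\M$ is right-$\B$ faithful and $am=0$ forces $a=0$ or $m=0$, then the first equation applied with $a=h_\B(nm)-\ell_\A(mn)$ yields either $m=0$ (in which case $nm=mn=0$ and both summands vanish) or $h_\B(nm)=\ell_\A(mn)$, giving $h_\B(nm)\oplus h_\A(mn)=\ell_\A(mn)\oplus h_\A(mn)\in Z(\G)$. The symmetric argument, applied to the second equation, covers the other branch of the definition of strong faithfulness.

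I do not anticipate a serious obstacle: the proof is essentially cosmetic surgery on the proof of Theorem \ref{domain}, with the one-sided zero-divisor condition built into strong faithfulness doing the same job that being a domain did before. The only point where one must be slightly careful is to verify that the cancellation hypothesis can be legitimately applied to the element $a=h_\B(nm)-\ell_\A(mn)$ of $\A$ (respectively $b=\ell_\B(nm)-h_\A(mn)$ of $\B$) and the vector $m$, but this is immediate from the definition.
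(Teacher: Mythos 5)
Your proposal is correct and follows essentially the same route as the paper, which likewise derives equations \eqref{mn} and \eqref{mnn} as in Theorem \ref{domain} and then applies strong faithfulness directly to conclude $h_\B(nm)=\ell_\A(mn)$ (or $\ell_\B(nm)=h_\A(mn)$) before invoking Corollary \ref{H}. Your explicit handling of the degenerate case $m=0$ is a small extra detail the paper's one-line proof leaves implicit, but the argument is the same.
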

 \begin{proof} Following the proof of Theorem \ref{domain}, the equations \eqref{mn} and \eqref{mnn} together with the strong faithfulness of $\M$  ensure  that $h_\B(nm)-\ell_\A(mn)=0,$ so
  $h_\B(nm)\oplus h_\A(mn)=\ell_\A(mn)\oplus h_\A(mn) \in Z(\mathcal{G}).$ The conclusion now follows from Corollary \ref{H}.
 \end{proof}
We remark that, to the best of our knowledge, we do not know when one can drop  the strong faithfulness in Theorem \ref{strong}.\\

Similar to what was introduced in \cite[Section 3]{CH1}, we  also introduce a critical subalgebra $\mathcal{W}_\A$ of an algebra $\A.$
 With the same notations  as in Theorem \ref{HH}, suppose that $P$ is a Lie derivation on $\A$ and $\ell_\A:\mathcal{A}\rightarrow Z(\mathcal{A})$ is a linear map  such that $P-\ell_\A $ is a derivation on $\mathcal{A}.$ Set
 \[\V_{\mathcal{A}}=\{a\in\A: \ell_\A(a)\oplus h_\A(a)\in Z(\G)\}.\]
 In other words,  $\mathcal{V}_{\A}$ consist of those elements $a\in\A$ such that
 \[\ell_\A(a)m=mh_\A(a)\ \mbox{and}\  n\ell_\A(a)=h_\A(a)n,\ \mbox{for all} \ m\in\M, n\in\N.\]
 It is also easy to verify that $\mathcal{V}_{\A}\subseteq h_\A^{-1}(\pi_\B(Z(\G))),$
with  equality holds in the case where $\M$ is faithful. With some modifications in the proof of {\cite[Proposition 10]{CH1}}, one can show  that $\mathcal{V}_{\A}$ is a  subalgebra of $\A$ containing all commutators and idempotents. More  properties of $\mathcal{V}_{\A}$  are investigated in \cite{Mk}.

We denote by $\mathcal{W}_\A$ the smallest subalgebra of $\A$ contains all commutators and idempotents. We are especially dealing with those algebras satisfying $\mathcal{W}_\A=\A.$ If $\mathcal{W}_\A=\A$ then trivially $h_\A(\A)\subseteq \pi_\B(Z(\G))$, or equivalently,
$\pi_\B(\mathcal{L}(\A))\subseteq \pi_\B(Z(\mathfrak\G)).$ Some examples of algebras satisfying $\mathcal{W}_\A=\A$ are:
 the full matrix algebra $\A=M_n(A), n\geq 2, $ where $A$ is a unital algebra, and also every simple unital algebra $\A$ with a nontrivial idempotent.

Regarding the latter  observations and some suitable combinations of  the various  assertions in the results  \ref{ideal},  \ref{domain} and \ref{strong}, we apply Theorem \ref{HH} and Corollary \ref{H}   to arrive  the main result of the paper providing several sufficient conditions ensuring the Lie derivation property for a generalized matrix algebra $\G,$ which is a generalization of {\cite[Theorem 11]{CH1}}.
\begin{theorem}\label{main}
A generalized matrix algebra $\G$ has Lie derivation property if the following three conditions hold:
\begin{enumerate}[\hspace{1em}\rm (I)]
\item $\pi_\B(Z(\G))=Z(\B)$ and $\M$ is a faithful left $\A$-module; or $\A=\mathcal{W}_\A$ and $\M$ is a faithful left $\A$-module; or $\A$ has Lie derivation property and $\A=\mathcal{W}_\A.$
\item $\pi_\A(Z(\G))=Z(\A)$ and $\M$ is a faithful right $\B$-module; or $\B=\mathcal{W}_\B$ and $\M$ is a faithful right $\B$-module; or $B$ has Lie derivation property and $\B=\mathcal{W}_\B.$
\item One of the following assertions holds:
\begin{enumerate}[\hspace{1em}\rm (i)]
\item Either $\A$ or $\B$ does not contain nonzero central ideals.
\item $\A$ and $\B$ are domain.
\item Either $\M$ or $\N$ is strongly faithful.
\end{enumerate}
\end{enumerate}
\end{theorem}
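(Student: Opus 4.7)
Given a Lie derivation $\K$ on $\G$ presented as in \eqref{gli}, the plan is to apply Theorem \ref{HH}: I will manufacture linear maps $\ell_\A\colon \A\to Z(\A)$ and $\ell_\B\colon \B\to Z(\B)$ fulfilling the three conditions (A), (B), (C) stated there. The work splits cleanly: hypotheses (I) and (II) yield the existence of $\ell_\A, \ell_\B$ together with the centrality condition (B) and the derivation property (A), while hypothesis (III) forces the compatibility condition (C), namely $\ell_\A(mn)=h_\B(nm)$ and $\ell_\B(nm)=h_\A(mn)$.

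The construction of $\ell_\A$ (and by symmetry $\ell_\B$) is handled by case analysis on the three branches of (I). In the first two branches, $\M$ is faithful as a left $\A$-module and the inclusion $h_\A(\A)\subseteq \pi_\B(Z(\G))$ holds: trivially when $\pi_\B(Z(\G))=Z(\B)$, and, when $\A=\W_\A$, by the observation just before the theorem that $\W_\A\subseteq h_\A^{-1}(\pi_\B(Z(\G)))$. Hence for each $a\in\A$ there is a unique central $\ell_\A(a)\in Z(\A)$ with $\ell_\A(a)\oplus h_\A(a)\in Z(\G)$, giving a linear $\ell_\A$ satisfying condition (B); that $P-\ell_\A$ is a derivation follows from a direct variant of Remark \ref{faith}(iii), namely expanding $f(aa'm)=f(a(a'm))$ via Proposition \ref{F}(c) and using the identity $\ell_\A(a)m=mh_\A(a)$ to deduce that $[(P-\ell_\A)(aa')-(P-\ell_\A)(a)a'-a(P-\ell_\A)(a')]m=0$ for all $m\in\M$, after which left-faithfulness finishes the job. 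In the third branch $\A$ itself has Lie derivation property, so the component $P$ of $\K$ is already proper, writing $P=P_0+\ell_\A$ for a derivation $P_0$ and a central-valued $\ell_\A$ vanishing on commutators; the subalgebra $\V_\A$ introduced before the theorem then contains all commutators and idempotents, hence $\V_\A\supseteq\W_\A=\A$, so $\ell_\A(a)\oplus h_\A(a)\in Z(\G)$ for every $a$, while $P-\ell_\A=P_0$ is automatically a derivation.

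With $\ell_\A, \ell_\B$ secured, I turn to condition (C), which is established by invoking the appropriate argument from one of Theorems \ref{ideal}, \ref{domain}, \ref{strong}, according to which branch of (III) holds. Under (III.i), the identity $a\gamma(mn,-nm)=\gamma(amn,-nam)$ with $\gamma(a,b)=\ell_\A(a)+h_\B(b)$, as in the proof of Theorem \ref{ideal}, exhibits $\A\gamma(mn,-nm)$ as a central ideal, which vanishes by hypothesis. Under (III.ii), the identity $(h_\B(nm)-\ell_\A(mn))m=0$ extracted from Proposition \ref{F}(c), (e) and $2$-torsion-freeness (exactly as in Theorem \ref{domain}) combined with the domain property of $\A$ and $\B$ yields $\ell_\A(mn)=h_\B(nm)$ via the dichotomy on $mn_0\neq 0$ versus $n_0m\neq 0$. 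Under (III.iii), the strong faithfulness of $\M$ (or $\N$) forces the same conclusion immediately, as in Theorem \ref{strong}. The symmetric identity $\ell_\B(nm)=h_\A(mn)$ follows in parallel. The main obstacle is bookkeeping in the mixed case where both $\A$ and $\B$ are reached via the third branches of (I) and (II): here $\M$ need not be faithful and the isomorphism $\varphi$ from the preliminaries is unavailable, so one must check that the reused arguments from \ref{ideal}--\ref{strong} depend only on the properties of $\ell_\A, \ell_\B$ secured in paragraph two, never on $\varphi$. Once (C) is in hand, Theorem \ref{HH} delivers the properness of $\K$.
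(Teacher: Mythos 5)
Your overall strategy is exactly the one the paper intends (the paper states Theorem \ref{main} without a written proof, pointing only to ``suitable combinations'' of Theorems \ref{ideal}, \ref{domain}, \ref{strong} with Theorem \ref{HH} and Corollary \ref{H}), and your second paragraph is sound: in each branch of (I) the map $\ell_\A$ exists, is pinned down by left-faithfulness or supplied by the properness of $P$, satisfies (B) via $\V_\A\supseteq\W_\A$, and makes $P-\ell_\A$ a derivation by the $f(aa'm)$ computation of Remark \ref{faith}. The problem is your last paragraph. You correctly identify the danger --- that the arguments of Theorems \ref{ideal}--\ref{strong} might secretly use the two-sided faithfulness of $\M$ --- but you dismiss it as ``bookkeeping'' to be checked, and the check in fact fails as stated. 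Under (III)(i) the central-ideal argument gives only $\ell_\A(mn)=h_\B(nm)$, i.e.\ $z_\A:=h_\B(nm)-\ell_\A(mn)=0$; under (III)(ii) the dichotomy on $mn_0\neq 0$ versus $n_0m\neq 0$ gives, for each fixed pair $(m,n)$, only \emph{one} of $z_\A=0$ or $z_\B:=\ell_\B(nm)-h_\A(mn)=0$. In Theorems \ref{ideal} and \ref{domain} the companion identity is then recovered through Corollary \ref{H}, whose sufficiency direction uses the isomorphism $\varphi$ and hence the full (two-sided) faithfulness of $\M$. In Theorem \ref{main} that faithfulness is not available in every branch: e.g.\ if (II) holds only through ``$\B$ has Lie derivation property and $\B=\W_\B$'' and (III)(i) holds with $\A$ having no central ideals, you obtain $z_\A=0$, hence $0\oplus z_\B\in Z(\G)$, i.e.\ $\M z_\B=0$ and $z_\B\N=0$ --- and nothing in the hypotheses forces $z_\B=0$. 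Yet condition (C) of Theorem \ref{HH} genuinely requires both equalities (they are exactly what makes $(Q-\ell_\B)(nm)=g(n)m+nf(m)$ hold, i.e.\ what makes $\D$ a derivation in its $(2,2)$-entry), so you cannot simply drop the second one.

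Concretely: only branch (III)(iii) closes both identities without further faithfulness input (there $z_\A m=0$ and $mz_\B=0$ combine with strong faithfulness to kill both $z_\A$ and $z_\B$). For (III)(i) and (III)(ii) you must either (a) extract the missing identity from the symmetric relations on $\N$ (using Proposition \ref{F}(d),(e) to get $nz_\A=0=z_\B n$ and then some faithfulness of $\N$), or (b) observe that the branches of (I) and (II) that you actually invoke always supply the one-sided faithfulness needed to pass from $z_\A=0$ to $z_\B=0$ (they do not, in the third branches), or (c) modify $\ell_\B$ to absorb the discrepancy, which is not obviously possible since $\ell_\B$ is constrained by (A) and (B). As written, the proof is complete only for those combinations of branches in which $\M$ retains the relevant one-sided faithfulness; the remaining combinations are precisely the content that distinguishes Theorem \ref{main} from Theorems \ref{ideal}--\ref{strong}, and they are left open.
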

\section{\color{SEC}Some Applications}
The main examples of generalized matrix algebras are: a triangular algebra ${\rm Tri}(\A,\M,\B)$, a unital algebra with a nontrivial idempotent,  the algebra $B(X)$ of operators on a Banach space $X$ and the full matrix algebra $M_n(\A)$ on a unital  algebra $\A$.
\subsection*{\color{SEC}Lie derivations on trivial generalized matrix algebras and  ${\rm Tri}(\A,\M,\B)$}
We say $\G$ is a trivial generalized matrix algebra when $\M\N=0$ and $\N\M=0.$ The main example of trivial generalized matrix algebra is the so-called triangular algebra ${\rm Tri}(\A,\M,\B)$ whose Lie derivation property has been extensively investigated by Cheung \cite{CH1, CH2}. As an immediate consequence of Corollary \ref{H} and Theorem \ref{main} we get the following corollary which generalizes {\cite[Corollary 7, Theorem 11]{CH1}}  to trivial generalized matrix algebras.
\begin{corollary}\label{trivial}
Let $\mathcal{G}$ be a trivial generalized matrix algebra and let
 $\K$ be a Lie derivation on $\mathcal{G}$  of the form presented in \eqref{gli}. If $\K$ is proper then
 $h_\A(\A) \subseteq \pi_\B(Z(\mathcal{G}))$ and $h_\B(\B)\subseteq \pi_\A(Z(\mathcal{G})).$
The converse is also hold in the case when $\M$ is faithful.

In particular, a trivial generalized matrix algebra $\G$ has Lie derivation property   if the following two conditions hold:
\begin{enumerate}[\hspace{1em}\rm (I)]
\item $\pi_\B(Z(\G))=Z(\B)$ and $\M$ is a faithful left $\A$-module; or $\A=\mathcal{W}_\A$ and $\M$ is a faithful left $\A$-module; or $\A$ has Lie derivation property and $\A=\mathcal{W}_\A.$
\item $\pi_\A(Z(\G))=Z(\A)$ and $\M$ is a faithful right $\B$-module; or $\B=\mathcal{W}_\B$ and $\M$ is a faithful right $\B$-module; or $\B$ has Lie derivation property and $\B=\mathcal{W}_\B.$
\end{enumerate}
\end{corollary}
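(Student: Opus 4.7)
The plan is to apply Corollary \ref{H} directly, exploiting the fact that in a trivial generalized matrix algebra the products $\M\N=0$ and $\N\M=0$ force one of the two conditions of Corollary \ref{H} to hold for free.

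For the characterization, let $\K$ be a Lie derivation of the form \eqref{gli}. Since $nm=0$ and $mn=0$ for all $m\in\M$, $n\in\N$, condition $\rm (B')$ of Corollary \ref{H}, namely $h_\B(nm)\oplus h_\A(mn)\in Z(\G)$, reduces to $0\oplus 0\in Z(\G)$, which is trivially true. Thus Corollary \ref{H} yields that properness of $\K$ implies $\rm (A')$, and, when $\M$ is faithful, $\rm (A')$ conversely suffices. This gives the desired equivalence without any further work.

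For the sufficient condition ensuring the Lie derivation property, it suffices to show that hypotheses (I) and (II) together force $\rm (A')$, i.e., $h_\A(\A)\subseteq\pi_\B(Z(\G))$ and $h_\B(\B)\subseteq\pi_\A(Z(\G))$. I verify the first inclusion from (I); the second follows from (II) by a symmetric argument. The first alternative is immediate since Proposition \ref{F} already has $h_\A:\A\to Z(\B)$, so $\pi_\B(Z(\G))=Z(\B)$ gives $h_\A(\A)\subseteq\pi_\B(Z(\G))$. For the second and third alternatives I invoke the critical subalgebra $\V_\A$ introduced after Theorem \ref{strong}: since $\V_\A$ is a subalgebra containing all commutators and idempotents and satisfies $\V_\A\subseteq h_\A^{-1}(\pi_\B(Z(\G)))$, the hypothesis $\A=\W_\A$ forces $\A=\W_\A\subseteq\V_\A$, yielding $h_\A(\A)\subseteq\pi_\B(Z(\G))$. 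The third alternative additionally uses the Lie derivation property of $\A$ to produce a central-valued $\ell_\A$ with $P-\ell_\A$ a derivation, which is exactly what makes $\V_\A$ well-defined in that alternative.

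The main simplification relative to Theorem \ref{main} is that no analogue of condition (III) is required here: (III) was introduced precisely to secure $\rm (B')$ of Corollary \ref{H}, and since $\rm (B')$ is automatic in a trivial generalized matrix algebra, it can simply be dropped. Consequently, the only verification work lies in checking that each alternative of (I) and (II) supplies the corresponding half of $\rm (A')$, which in turn reduces, via the $\V_\A$ machinery, to facts already established in the discussion preceding Theorem \ref{main}.
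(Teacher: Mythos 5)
Your treatment of the first half (the characterization of properness) is correct and is precisely the paper's route: in a trivial generalized matrix algebra $mn=0$ and $nm=0$ for all $m\in\M$, $n\in\N$, so condition $\rm (B')$ of Corollary \ref{H} degenerates to $0\oplus 0\in Z(\G)$ and the equivalence collapses to $\rm (A')$ alone.

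The second half, however, contains a genuine gap. You reduce the sufficiency of (I) and (II) to verifying $\rm (A')$ and then rely on the converse direction of Corollary \ref{H} to conclude properness. That converse is only available when $\M$ is faithful as an $(\A,\B)$-module: its proof needs the isomorphism $\varphi:\pi_\A(Z(\G))\longrightarrow\pi_\B(Z(\G))$ to define $\ell_\A=\varphi^{-1}\circ h_\A$ and $\ell_\B=\varphi\circ h_\B$, and Remark \ref{faith}(iii) to discard condition $\rm (A)$ of Theorem \ref{HH}. Hypotheses (I)--(II) do not supply this faithfulness in general; for instance, if both (I) and (II) hold via their third alternatives ($\A$ has the Lie derivation property with $\A=\W_\A$, and likewise for $\B$), nothing at all is assumed about $\M$. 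The paper therefore routes this half through Theorem \ref{main}, i.e.\ through Theorem \ref{HH} directly: each alternative of (I) is used to manufacture an $\ell_\A$ with $P-\ell_\A$ a derivation and $\ell_\A(a)\oplus h_\A(a)\in Z(\G)$ (conditions $\rm (A)$ and $\rm (B)$ on the $\A$-side), each alternative of (II) does the same on the $\B$-side, and condition $\rm (C)$ is automatic in the trivial case --- which is the correct form of your observation that (III) of Theorem \ref{main} may be dropped. Your per-alternative checks are in the right spirit but are aimed at the wrong criterion; note also that in the second alternative ($\A=\W_\A$ and $\M$ a faithful left $\A$-module) the subalgebra $\V_\A$ you invoke is only defined once an $\ell_\A$ with $P-\ell_\A$ a derivation is available, so there one must first obtain $h_\A(\A)\subseteq\pi_\B(Z(\G))$, construct $\ell_\A$ pointwise using left faithfulness, and only then deduce from Proposition \ref{F}(c) that $P-\ell_\A$ is a derivation.
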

The following example which has been adapted from {\cite[Example 3.8]{B}} presents a trivial generalized matrix algebra (which is not a triangular algebra) without the Lie derivation property.
\begin{example} Let $\M$ be a commutative unital algebra of dimension $3$ (on the commutative unital ring ${\bf R}$) with base $\{1, a_0, b_0\}$ such that ${a_0}^2={b_0}^2=a_0b_0=b_0a_0=0.$ Set $\N=\M$ and let $\A$ and $\B$ be the subalgebras of $\M$ generated by $\{1, a_0\}$ and $\{1, b_0\}$, respectively. Then $\M\N=0=\N\M$ so the generalized matrix algebra  $\G=\left(\begin{array}{cc}
\mathcal{A} & \M\\
\N & \B\\
\end{array}\right)$  is trivial, and a direct calculation reveals that the map $\K:\G\longrightarrow\G$ defined as \begin{eqnarray*}
\K\left(\begin{array}{cc}
r_1+r_2a_0 & s_1+s_2a_0+s_3b_0\\
t_1+t_2a_0+t_3b_0& u_1+u_2b_0\\
\end{array}\right)=\left(\begin{array}{cc}
 u_2a_0& -s_3a_0-s_2b_0\\
-t_3a_0-t_2b_0& r_2b_0\\
\end{array}\right),
\end{eqnarray*}
(where the coefficients are taken from the ring ${\bf R}$) is a non-proper Lie derivation. It is worthwhile mentioning that $Z(\G)=R\cdot 1_\G$ and $\pi_\A(Z(\G))=R\cdot 1_\A\neq Z(\A), \pi_\B(Z(\G))=R\cdot 1_\B\neq Z(\B)$. Furthermore, it is also easy to verify that  $\A\neq\mathcal{W}_\A, \B\neq\mathcal{W}_\B$. Therefore,  none of the conditions (I) and (II) of Corollary \ref{trivial} is hold. Nevertheless, $\A$ and $\B$ have Lie derivation property. It should  be also noted that $h_\A(\A)\nsubseteq \pi_\B(Z(\G)),\ h_\B(\B)\nsubseteq\pi_\A(Z(\G))$; as, $h_\A(a_0)=b_0$ and  $h_\B(b_0)=a_0.$

This example can be compared to that was  given by Cheung in \cite[Example 8]{CH1}. They clarify the same discipline, however, $\G$ is not a triangular algebra.
\end{example}
\subsection*{\color{SEC}Lie derivations on unital algebras with a nontrivial idempotent}
In the following we investigate the Lie derivation property for some unital algebras with a nontrivial idempotent.  Let  $\A$ be  a unital algebra with  a nontrivial idempotent $p$ and $q=1-p.$ Then  $\A$  enjoys the Peirce decomposition
{\small $\G=\left(\begin{array}{cc}
p\A q & p\A q\\
q\A p & q\A q\\
\end{array}\right),$}
as a generalized matrix algebra.  Applying Theorem \ref{main} for this generalized matrix algebra $\G$ we obtain the following result which partly improves the case $n=2$ of a result given  by Wang \cite[Theorem 2.1]{W} and Benkovi\v{c} \cite[Theorem 5.3]{B}.
\begin{corollary}\label{idempotent}
Let $\A$ be a $2-$torsion free unital algebra with  a nontrivial idempotent $p$ and $q=1-p.$ Then $\A$ has Lie derivation property if the following three conditions hold:
\begin{enumerate}[\hspace{1em}\rm (I)]
\item {\small $Z(q\A q)=Z(\A)q$ and $p\A q$ is a faithful left $p\A p-$module; or $p\A p=\mathcal{W}_{p\A p}$ and $p\A q$ is a faithful left $p\A p-$module; or $p\A p$ has Lie derivation property and $p\A p=\mathcal{W}_{p\A p}.$}
\item {\small $Z(p\A p)=Z(\A)p$ and $q\A p$ is a faithful right $q\A q-$module; or $q\A q=\mathcal{W}_{q\A q}$ and $q\A p$ is a faithful right $q\A q-$module; or $q\A q$ has Lie derivation property and $q\A q=\mathcal{W}_{q\A q}.$}
\item One of the following assertions holds:
\begin{enumerate}[\hspace{1em}\rm (i)]
\item Either $p\A p$ or $q\A q$ does not contain nonzero central ideals.
\item $p\A p$ and $q\A q$ are domain.
\item Either $p\A q$ or $q\A p$ is strongly faithful.
\end{enumerate}
\end{enumerate}
\end{corollary}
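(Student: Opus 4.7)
The plan is to realize $\A$ as a generalized matrix algebra via its Peirce decomposition and then invoke Theorem \ref{main} verbatim. Concretely, with $p$ and $q=1-p$ a pair of complementary nontrivial idempotents, the map
\[
a\longmapsto \begin{pmatrix} pap & paq \\ qap & qaq \end{pmatrix}
\]
is an algebra isomorphism between $\A$ and the generalized matrix algebra $\G$ built from the Morita context $(p\A p,\, q\A q,\, p\A q,\, q\A p)$ with multiplication inherited from $\A$. Since the Lie derivation property is transported by algebra isomorphism, it suffices to check that the hypotheses of Theorem \ref{main} hold for this particular $\G$ under the three conditions in the corollary.

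First I would identify the relevant centers. A standard computation shows that an element $z\in\A$ lies in $Z(\A)$ if and only if $pz=zp$, $qz=zq$, $pzp\in Z(p\A p)$, $qzq\in Z(q\A q)$, and the mixed blocks vanish; from this one reads off
\[
Z(\G)\;\cong\;\{\,pzp\oplus qzq : z\in Z(\A)\,\},
\qquad
\pi_{p\A p}(Z(\G))=Z(\A)p,\qquad \pi_{q\A q}(Z(\G))=Z(\A)q.
\]
Hence the algebraic conditions $Z(p\A p)=Z(\A)p$ and $Z(q\A q)=Z(\A)q$ in (I) and (II) are precisely $\pi_{p\A p}(Z(\G))=Z(p\A p)$ and $\pi_{q\A q}(Z(\G))=Z(q\A q)$ in Theorem \ref{main}. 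The other alternatives in (I) and (II), namely the subalgebra conditions $p\A p=\W_{p\A p}$ and $q\A q=\W_{q\A q}$, together with the Lie derivation property of $p\A p$ or $q\A q$, are already stated in the form required by Theorem \ref{main}.

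Next, conditions (i)--(iii) of (III) translate directly: the absence of nonzero central ideals in $p\A p$ or $q\A q$, their being domains, and the strong faithfulness of $p\A q$ or $q\A p$ (as bimodules over the corresponding corner algebras) are exactly the three alternatives appearing in part (III) of Theorem \ref{main}. The $2$-torsion freeness of $\M=p\A q$ and $\N=q\A p$, needed as a standing assumption in Section~2, follows from the $2$-torsion freeness of $\A$.

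With the dictionary in place, applying Theorem \ref{main} to $\G$ yields the Lie derivation property of $\G$, and transporting back through the Peirce isomorphism completes the proof. The only step that requires genuine verification (as opposed to renaming) is the identification $\pi_{p\A p}(Z(\G))=Z(\A)p$ and its $q$-analogue; everything else is a direct translation. I do not anticipate a real obstacle, since this corollary is designed as a specialization rather than a strengthening of Theorem \ref{main}.
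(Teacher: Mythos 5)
Your proposal is correct and follows exactly the route the paper intends: the paper's (essentially one-line) proof is precisely to pass to the Peirce decomposition $\G=\left(\begin{smallmatrix} p\A p & p\A q\\ q\A p & q\A q\end{smallmatrix}\right)$ and apply Theorem \ref{main}, with the only substantive check being the identification $\pi_{p\A p}(Z(\G))=Z(\A)p$ and $\pi_{q\A q}(Z(\G))=Z(\A)q$, which you carry out correctly. Your additional remark that $2$-torsion freeness of $\A$ supplies the standing assumption on $\M$ and $\N$ is a detail the paper leaves implicit.
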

As a consequence of the above corollary we bring  the following result showing that the algebra $B(X)$ of bounded operators on a Banach space $X$ enjoys Lie derivation property. The same result has already proved by Lu and Jing \cite{LJ} for Lie derivable map at zero and idempotents by a completely different method. See also \cite{LL} for the properness of nonlinear derivations on $B(X).$
\begin{corollary}\label{K8}
Let $X$ be a Banach space of dimension greater than 2. Then $B(X)$ has Lie derivation property.
\end{corollary}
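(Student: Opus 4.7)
The plan is to realize $B(X)$ as a generalized matrix algebra via its Peirce decomposition with respect to a suitably chosen idempotent, and then verify the three groups of hypotheses of Corollary \ref{idempotent}. Since $\dim X > 2$, one can fix a nontrivial bounded idempotent $p \in B(X)$ whose range satisfies $\dim pX \geq 2$; for instance, project onto a two-dimensional (automatically closed and complemented) subspace of $X$. Writing $q = I_X - p$ and making the standard identifications, one obtains the generalized matrix algebra representation
\[
B(X) \cong \left(\begin{array}{cc} B(pX) & B(qX,pX) \\ B(pX,qX) & B(qX) \end{array}\right),
\]
to which Corollary \ref{idempotent} will be applied.

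I would then verify each of the three groups of conditions in turn. For the center identities appearing in the first options of (I) and (II), the classical fact that $Z(B(Y)) = \mathbb{F}\cdot I_Y$ for every Banach space $Y$ immediately yields $Z(pB(X)p) = \mathbb{F}\cdot p = Z(B(X))p$ and $Z(qB(X)q) = Z(B(X))q$. For the faithfulness requirement, if $s \in pB(X)p$ annihilates $pB(X)q$ on the left, testing against rank-one operators $y \otimes \varphi$ with $y \in pX$ and $\varphi \in (qX)^{\ast}$ arbitrary forces $sy = 0$ for every $y \in pX$, hence $s = 0$; the same argument on the opposite side shows that $qB(X)p$ is faithful as a right $qB(X)q$-module. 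For (III), since $\dim pX \geq 2$, the algebra $pB(X)p \cong B(pX)$ is a noncommutative unital prime algebra possessing a nontrivial idempotent, hence by the remark following Theorem \ref{ideal} it contains no nonzero central ideals; this secures (III)(i).

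The step I expect to require the most care is not any single calculation but the bookkeeping: one must pick the idempotent so that the Peirce decomposition simultaneously satisfies the center conditions of (I) and (II) while also placing a corner of dimension at least two on one side to kill central ideals in (III). The choice $\dim pX \geq 2$ achieves all of this, and the remaining verifications reduce to standard structural facts about $B(Y)$ for Banach spaces $Y$. With all three groups of hypotheses secured, Corollary \ref{idempotent} concludes that $B(X)$ enjoys the Lie derivation property.
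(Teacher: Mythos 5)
Your proof is correct and follows essentially the same route as the paper: realize $B(X)$ as a generalized matrix algebra via the Peirce decomposition with respect to a finite-rank idempotent, check the center identities $Z(pB(X)p)=Z(B(X))p$, $Z(qB(X)q)=Z(B(X))q$ and the faithfulness of the off-diagonal corners using rank-one operators, and then invoke Corollary \ref{idempotent}. The only cosmetic difference is that the paper uses a rank-one idempotent $p=x_0\otimes f_0$ and secures condition (III)(i) from the corner $qB(X)q$ (which has dimension at least $2$ since $\dim X>2$), whereas you take $\dim pX\geq 2$ and secure it from $pB(X)p$; both choices work.
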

\begin{proof}
Set $\A=B(X)$. Consider a nonzero element $x_0 \in X$ and $f_0 \in X^*$ such that $f_0(x_0)=1,$  then $p=x_0\otimes f_0$
 defined by $y \longmapsto f_0(y)x_0$ is a nontrivial idempotent. A direct verification reveals that $\A$ satisfies the implications \eqref{faithful}. Indeed, if $pTp\A q=\{0\}$ for some $T\in\A$, then choose a nonzero element  $y\in q(X)$ such that $q(y)=y.$ Let $x\in X$ then there exists an operator   $S\in B(X)$ such that $S(y)=x$ (e.g. $S:=x\otimes g$ for some $g\in X^*$ with $g(y)=1$). We then get $pTp(x)=pTp(S(q(y)))=0.$
 Further, it can be readily verified that   $Z(\A)=\Bbb{C}I_X=\Bbb{C}(p+q)$, $Z(p\A p)=\Bbb{C}p$ and $Z(q\A q)=\Bbb{C}q.$ In particular, $Z(p\A p)=Z(\A)p$, $Z(q\A q)=Z(\A)q.$
 These also imply that neither $p\A p$ nor $q\A q$ has no central ideals. By Corollary \ref{idempotent}, $\A=B(X)$ has Lie derivation property.
  \end{proof}
We conclude this section with an application of Corollary \ref{idempotent} to the full matrix algebra $\A=M_n(A)$, $n\geq 2$, where $A$ is a $2-$torsion free unital algebra. Then $p=e_{11}$ is a nontrivial idempotent and  $q=e_{22}+\cdots e_{nn}.$ It is easy to verify that $p\A p=A, q\A q=M_{n-1}(A).$ As  $Z(\A)=Z(A)1_\A$ we get  $Z(p\A p)=Z(\A)p, Z(q\A q)=Z(\A)q,$ so both conditions (I) and (II) in Corollary \ref{idempotent} are fulfilled. Further, in  the case where  $n\geq 3,$ the algebra $q\A q=M_{n-1}(A)$ does not contain nonzero central ideals, so $\A=M_n(A)$ also satisfies assumption (i) of the aforementioned corollary.  Therefore  $M_n(A)$ has Lie derivation property for $n\geq 3;$ this is an adaptation of \cite[Corollary 1]{DW}. The same result was treated (especially for $n=2$)  in \cite[Corollary 5.7]{B}.


\begin{thebibliography}{99}

\bibitem{B}{\sc  D. Benkovi\v{c}}, \textit{Lie triple derivations of unital algebras with idempotents}, Linear Multilinear Algebra, Linear  Multilinear Algebra  \textbf{63} (2015), 141-165.


\bibitem{Br}
{\sc  M. Bre\v{s}ar,} \textit{Commuting traces of biadditive mappings, commutivity preserving mappings and Lie mappings}, Trans. Amer. Math. Soc.  \textbf{335} (1993), 525-546.




\bibitem{CH1}{\sc W.-S. Cheung,} \textit{ Lie derivations of triangular algebras}, Linear Multilinear Algebra \textbf{51 } (2003), 299-310.

\bibitem{CH2}{\sc W.-S. Cheung,} \textit{ Mappings on triangular algebras}, PhD Thesis, University of Victoria, 2000.


\bibitem{DW}
{\sc Y. Du and Y. Wang,} \textit{ Lie derivations of generalized matrix algebras},  Linear Algebra Appl.  \textbf{437 }  (2012), 2719-2726.




\bibitem{LW} {\sc Y. Li and F. Wei,} \textit{Semi-centralizing maps of generalized matrix algebras}, Linear Algebra Appl. \textbf{436} (2012), 1122-1153.


\bibitem{LL}{\sc F. Lu and B. Liu,} \textit{   Lie derivable maps on $B(X)$},   J. Math. Anal. Appl.  \textbf{372} (2010), 369-376.

\bibitem{LJ} {\sc F. Lu and W. Jing,} \textit{Charactrizations of Lie derivations of  $B(X)$}, Linear Algebra Appl. \textbf{432} (2010), 89-99.

\bibitem{Ma}{\sc W.S. Martindale III,} \textit{Lie derivations of primitive rings}, Michigan Math. J. \textbf{11}  (1964), 183-187.



\bibitem{Mk}{\sc A.H. Mokhtari}, \textit{Lie derivations on module extension Banach algebras},  PhD Thesis, Ferdowsi University of Mashhad (2015).

\bibitem{ME} {\sc F. Moafian and H.R. Ebrahimi Vishki,} \textit{ Lie higher derivations on triangular algebras revisited}, To appear in Filomat.


\bibitem{S} {\sc A.D. Sands,} \textit{Radicals and Morita contexts,} J. Algebra \textbf{24} (1973), 335-345.

\bibitem{W} {\sc Y. Wang,} \textit{ Lie $n-$derivations of unital algebras with idempotents},  Linear  Algebra Appl. \textbf{458}  (2014), 512-525.

\bibitem{WW}{\sc Y. Wang and Y. Wang}, \textit{ Multiplicative Lie n-derivations of generalized matrix algebras}, Linear Algebra Appl. \textbf{438}  (2013),  2599-2616.

    \end{thebibliography}
\end{document}